\definecolor{darkgreen}{rgb}{0, 0.5, 0}
\DeclareMathOperator{\rank}{rank}
\newtheorem{theorem}{Theorem}
\newtheorem{lemma}{Lemma}
\newtheorem{corollary}{Corollary}
\newtheorem{definition}{Definition}
\newtheorem{Ex}{Example}
\newtheorem*{theorem*}{Theorem}
\newtheorem{remark}{Remark}
\newcommand{\ind}{{\rm ind \hspace{.1cm}}}
\begin{document}

\title{The index of nilpotent Lie poset algebras}

\author[*]{Vincent E. Coll, Jr.}
\author[*]{Nicholas W. Mayers}
\author[*]{Nicholas V. Russoniello}

\affil[*]{Department of Mathematics, Lehigh University, Bethlehem, PA, 18015}

\maketitle

\begin{abstract}
\noindent
We establish combinatorial formulas for the index of a class of matrix Lie algebras whose matrix forms are encoded by strict partial orderings.
\end{abstract}

\noindent
\textit{Mathematics Subject Classification 2010}: 17B99, 05E15

\noindent 
\textit{Key Words and Phrases}: Frobenius Lie algebra, Lie poset algebra, index

\section{Introduction}

The index of a Lie algebra is an important algebraic invariant which was first introduced by Dixmier in 1974 (see \textbf{\cite{D}}). Topical research has concentrated on establishing combinatorial formulas for the index of 
Lie algebras in certain combinatorially defined families. In particular, 
the primary focus has been on the families of seaweed algebras (see \textbf{\cite{DK}}) and Lie poset algebras (see \textbf{\cite{CM}}).\footnote{Similar combinatorial investigations have found success considering the extensions of seaweed and Lie poset algebras to the classical families of Lie algebras. For seaweeds see \textbf{\cite{CHM,Coll2,Elash,Joseph,Panyushev1,Panyushev2,Panyushev3}} and for Lie poset algebras see \textbf{\cite{CMBCD,CM}}.} Each of these families of Lie algebras can be reckoned as matrix algebras defined by a matrix form which is encoded by a combinatorial object.
In the case of seaweed algebras, matrix forms are encoded by pairs of compositions, while for Lie poset algebras, the matrix form is encoded by posets (non-strict partial orderings). Here, our focus is the index theory of matrix algebras whose matrix forms are encoded by strict partial orderings.

\bigskip
Formally, the \textit{index} of a Lie algebra $\mathfrak{g}$ is defined as 
\[\ind \mathfrak{g}=\min_{F\in \mathfrak{g^*}} \dim  (\ker (B_F)),\]

\noindent where $B_F$ is the skew-symmetric \textit{Kirillov form} defined by $B_F(x,y)=F([x,y])$, for all $x,y\in\mathfrak{g}$. Of particular interest are those Lie algebras which have index zero, and are called \textit{Frobenius}.\footnote{Frobenius algebras are of  special interest in deformation and quantum group theory stemming from their connection with the classical Yang-Baxter equation (see \textbf{\cite{G1,G2}}).}

As noted, Lie poset algebras form a class of algebras whose index theory has been investigated. Such algebras can be defined as the Lie algebras naturally arising from the incidence algebras of posets \textbf{\cite{Ro}}. As a result, for each poset $(\mathcal{P},\preceq_{\mathcal{P}})$ with $\mathcal{P}=\{1,\hdots,n\}$, one obtains a Lie algebra $\mathfrak{g}(\mathcal{P})$ consisting of $|\mathcal{P}|\times|\mathcal{P}|$ matrices whose $i,j$-entry can be nonzero if and only if $i\preceq_{\mathcal{P}} j$; the Lie bracket of $\mathfrak{g}(\mathcal{P})$ is given by $[X,Y]=XY-YX$, where juxtaposition denotes standard matrix multiplication. Removing diagonal elements from $\mathfrak{g}(\mathcal{P})$ results in a nilpotent subalgebra which, following \textbf{\cite{nilco}}, we denote by $\mathfrak{g}^{\prec}(\mathcal{P})$ and refer to as a ``nilpotent Lie poset algebra."\footnote{In \textbf{\cite{nilco}}, the authors consider the homology and cohomology of nilpotent Lie poset algebras -- but not their index.}

Here, we consider the index theory of nilpotent Lie poset algebras. In particular, we establish a combinatorial formula for the index of nilpotent Lie poset algebras (see Section~\ref{sec:formulas}). It is worth mentioning that Panov \textbf{\cite{Panov}} develops a mechanism for computing the index of nilpotent Lie poset algebras corresponding to a 
disjoint sum of chains.  He does not develop ``closed-form" formulas as we do here.

We begin with poset preliminaries in Section~\ref{sec:prelim} and conclude with an Epilogue in Section~\ref{sec:conc} where we contrast the results here with recent results in the case of (solvable) Lie poset algebras.

\section{Preliminaries}\label{sec:prelim}

A \textit{finite poset} $(\mathcal{P}, \preceq_{\mathcal{P}})$ consists of a finite set $\mathcal{P}=\{1,\hdots,n\}$ together with a binary relation $\preceq_{\mathcal{P}}$ which is reflexive, anti-symmetric, and transitive. It is further assumed that if $x\preceq_{\mathcal{P}}y$ for $x,y\in\mathcal{P}$, then $x\le y$, where $\le$ denotes the natural ordering on $\mathbb{Z}$. When no confusion will arise, we simply denote a poset $(\mathcal{P}, \preceq_{\mathcal{P}})$ by $\mathcal{P}$, and $\preceq_{\mathcal{P}}$ by $\preceq$. 

Let $p_1,p_2\in\mathcal{P}$. If $p_1\preceq p_2$ and $p_1\neq p_2$, then we call $p_1\preceq p_2$ a \textit{strict relation} and write $p_1\prec p_2$. Let $Rel(\mathcal{P})$ denote the set of strict relations between elements of $\mathcal{P}$, $Ext(\mathcal{P})$ denote the set of minimal and maximal elements of $\mathcal{P}$, and $Rel_E(\mathcal{P})$ denote the number of strict relations between the elements of $Ext(\mathcal{P})$.

\begin{Ex}\label{ex:posnot}
Let $\mathcal{P}$ be the poset $\mathcal{P}=\{1,2,3,4,5,6\}$ with $1,2\preceq 3\preceq 4,5,6$. We have $$Rel(\mathcal{P})=\{1\prec 3,1\prec 4,1\prec 5,1\prec 6,2\prec 3,2\prec 4,2\prec 5,2\prec 6,3\prec 4,3\prec 5,3\prec 6\},$$  $$Ext(\mathcal{P})=\{1,2,4,5,6\},\quad \text{and}\quad Rel_E(\mathcal{P})=\{1\prec 4, 1\prec 5, 1\prec 6,2\prec 4, 2\prec 5, 2\prec 6\}.$$
\end{Ex}

\noindent
If $p_1\prec p_2$ and there does not exist $p\in \mathcal{P}$ satisfying $p_1\prec p\prec p_2$, then $p_1\prec p_2$ is a \textit{covering relation}.  Covering relations are used to define a visual representation of $\mathcal{P}$ called the \textit{Hasse diagram} -- a graph whose vertices correspond to elements of $\mathcal{P}$ and whose edges correspond to covering relations (see, for example, Figure~\ref{fig:poset}). A totally ordered subset $S\subset\mathcal{P}$ is called a \textit{chain}. We define the \textit{height} of a poset $\mathcal{P}$ to be one less than the largest cardinality of a chain in $\mathcal{P}$.

\begin{Ex}\label{ex:poset}
Let $\mathcal{P}$ be the poset of Example~\ref{ex:posnot}. In Figure~\ref{fig:poset} we illustrate the Hasse diagram of $\mathcal{P}$.

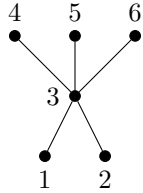
\begin{figure}[H]
$$\begin{tikzpicture}[scale=0.8]
	\node [circle, draw = black, fill = black, inner sep = 0.5mm, label=below:{1}] (1) at (-0.5,0) {};
	\node (2) at (0, 1)[circle, draw = black, fill = black, inner sep = 0.5mm, label=left:{3}] {};
	\node [circle, draw = black, fill = black, inner sep = 0.5mm, label=above:{4}] (3) at (-1,2) {};
	\node [circle, draw = black, fill = black, inner sep = 0.5mm, label=above:{6}] (4) at (1,2) {};
    \draw (1)--(2);
    \draw (2)--(3);
    \draw (2)--(4);
\node (v1) at (0.5,0) [circle, draw = black, fill = black, inner sep = 0.5mm, label=below:{2}] {};
\draw (v1) -- (2);
\node [circle, draw = black, fill = black, inner sep = 0.5mm, label=above:{5}] (v2) at (0,2) {};
\draw (2) -- (v2);
\end{tikzpicture}$$
\caption{Hasse diagram of a poset}\label{fig:poset}
\end{figure}
\end{Ex}

Let \textbf{k} be an algebraically closed field of characteristic zero, which we may take to be the complex numbers. The \textit{nilpotent Lie poset algebra} $\mathfrak{g}^{\prec}(\mathcal{P})=\mathfrak{g}^{\prec}(\mathcal{P}, \textbf{k})$ is the span over $\textbf{k}$ of elements $E_{p_i,p_j}$, for $p_i,p_j\in\mathcal{P}$ satisfying $p_i\prec p_j$, with Lie bracket $[X,Y]=XY-YX$, where $E_{p_i,p_j}E_{p_k,p_l}=E_{p_i,p_l}$ if $p_j=p_k$ and $0$ otherwise. The algebra $\mathfrak{g}^{\prec}(\mathcal{P})$ may be regarded as a subalgebra of the algebra of $n \times n$ strictly upper-triangular matrices over $\textbf{k}$ by replacing each basis element $E_{p_i,p_j}$ by the $n\times n$ matrix containing a 1 in the $i,j$-entry and 0's elsewhere. The product of elements $E_{p_i,p_j}$ becomes matrix multiplication.

\begin{Ex}\label{ex:matrixform}  
Let $\mathcal{P}$ be the poset given in Example~\ref{ex:posnot}. The matrix form of elements in $\mathfrak{g}^{\prec}(\mathcal{P})$ is illustrated in Figure~\ref{fig:matform}, where the $\ast$'s denote potential non-zero entries. 
\begin{figure}[H]
$$\kbordermatrix{
    & 1 & 2 & 3 & 4 & 5 & 6  \\
   1 & 0 & 0 & * & *  & * & *   \\
   2 & 0 & 0 & * & * & * & *   \\
   3 & 0 & 0 & 0 & * & * & *   \\
   4 & 0 & 0 & 0 & 0 & 0 & 0   \\
   5 & 0 & 0 & 0 & 0 & 0 & 0   \\
   6 & 0 & 0 & 0 & 0 & 0 & 0   \\
  }$$
\caption{Matrix form of $\mathfrak{g}^{\prec}(\mathcal{P})$, for $\mathcal{P}=\{1,2,3,4,5,6\}$ with $1,2\preceq3\preceq4,5,6$}\label{fig:matform}
\end{figure}
\end{Ex}

\section{Combinatorial index formulas}\label{sec:formulas}

In this section, we determine combinatorial formulas for the index of nilpotent Lie poset algebras.

It will be convenient to use an alternative characterization of the index. Let $\mathfrak{g}$ be an arbitrary Lie algebra with basis $\{x_1,...,x_n\}$.  The index of $\mathfrak{g}$ can be expressed using the \textit{commutator matrix}, $([x_i,x_j])_{1\le i, j\le n}$, over the quotient field $R(\mathfrak{g})$ of the symmetric algebra $Sym(\mathfrak{g})$ as follows (see \textbf{\cite{D}}).

\begin{theorem}\label{thm:commat}
 The index of $\mathfrak{g}$ is given by  
 $$\ind \mathfrak{g}= n-\rank_{R(\mathfrak{g})}([x_i,x_j])_{1\le i, j\le n}.$$ 
\end{theorem}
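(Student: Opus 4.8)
The plan is to unravel both sides in terms of the skew-symmetric forms $B_F$ and then reduce the claim to a standard fact comparing the generic rank of a matrix of polynomials with its maximal rank under specialization. First I would rewrite the defining formula for the index. For a fixed $F\in\mathfrak{g}^*$, the matrix of the Kirillov form $B_F$ in the basis $\{x_1,\dots,x_n\}$ is precisely $(F([x_i,x_j]))_{1\le i,j\le n}$, so that $\dim(\ker(B_F)) = n - \rank(F([x_i,x_j]))_{1\le i,j\le n}$. Taking the minimum over $F$ converts the definition of the index into
$$\ind \mathfrak{g} = n - \max_{F\in\mathfrak{g}^*}\, \rank\,(F([x_i,x_j]))_{1\le i,j\le n}.$$
Thus it suffices to show that the maximal rank of these scalar matrices, as $F$ ranges over $\mathfrak{g}^*$, equals the rank of the commutator matrix over $R(\mathfrak{g})$.

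The second step is to realize each scalar matrix as a specialization of the commutator matrix. Regard $Sym(\mathfrak{g})$ as the polynomial ring $\mathbf{k}[x_1,\dots,x_n]$; then each functional $F\in\mathfrak{g}^*$ determines an evaluation homomorphism $Sym(\mathfrak{g})\to\mathbf{k}$ sending $x_k\mapsto F(x_k)$. Writing $[x_i,x_j]=\sum_k c_{ij}^k x_k$ with structure constants $c_{ij}^k$, this homomorphism sends the $(i,j)$-entry $[x_i,x_j]$ of the commutator matrix to $\sum_k c_{ij}^k F(x_k) = F([x_i,x_j])$. Hence the scalar matrix $(F([x_i,x_j]))_{1\le i,j\le n}$ is exactly the entrywise evaluation of the commutator matrix at the point corresponding to $F$.

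Finally I would invoke the comparison between generic and specialized rank. Let $r=\rank_{R(\mathfrak{g})}([x_i,x_j])_{1\le i,j\le n}$. By the definition of rank over the fraction field, there is an $r\times r$ minor that is a nonzero polynomial in $Sym(\mathfrak{g})$, while every $(r+1)\times(r+1)$ minor is the zero polynomial. Since $\mathbf{k}$ is algebraically closed of characteristic zero, hence infinite, a nonzero polynomial does not vanish identically, so there exists $F$ at which the chosen $r\times r$ minor is nonzero; at that $F$ the evaluated matrix has rank at least $r$. Conversely, at every $F$ all $(r+1)\times(r+1)$ minors vanish, so the evaluated matrix has rank at most $r$. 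Therefore $\max_{F}\rank(F([x_i,x_j]))_{1\le i,j\le n}=r$, and substituting into the displayed formula gives $\ind \mathfrak{g}=n-r$, as claimed.

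The step I expect to be the crux — the one that must be stated with care — is the last one: the passage from the generic rank over $R(\mathfrak{g})$ to the maximum rank over specializations. It is precisely here that the hypothesis on the field (infinite, guaranteed by algebraic closedness in characteristic zero) is essential, since over a finite field a nonvanishing minor could conceivably vanish at every rational point. The earlier steps are essentially bookkeeping, matching the Kirillov matrix at $F$ with the evaluation of the commutator matrix; the content of the theorem lives in this generic-versus-pointwise rank comparison.
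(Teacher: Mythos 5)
Your proof is correct. Note, however, that the paper offers no proof of this statement at all: Theorem~\ref{thm:commat} is quoted from Dixmier's book (the reference \textbf{\cite{D}}), where it appears as a standard fact, so there is no internal argument to compare against. Your argument is precisely the standard one: identify the matrix of $B_F$ with the evaluation of the commutator matrix under the algebra homomorphism $Sym(\mathfrak{g})\to\mathbf{k}$ induced by $F$, rewrite $\ind\mathfrak{g}=n-\max_F\rank\,(F([x_i,x_j]))$, and then observe that the generic rank over $R(\mathfrak{g})$ coincides with the maximal specialized rank because a nonzero minor, being a nonzero polynomial over an infinite field, admits a point of nonvanishing, while vanishing of all $(r+1)\times(r+1)$ minors persists under every specialization. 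You also correctly flag where the field hypothesis enters; over a finite field the generic rank could exceed every specialized rank, so the infiniteness of $\mathbf{k}$ (here guaranteed by algebraic closure in characteristic zero) is exactly what makes the crux step work.
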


\begin{Ex}
Consider the Lie algebra $\mathfrak{g}$ consisting of the upper triangular matrices in $\mathfrak{sl}(2)$. A Chevalley basis for $\mathfrak{g}$ is given by $\{x_1,x_2\}$, where $[x_1,x_2]=2x_2$. The commutator matrix of $\mathfrak{g}$ is illustrated in Figure~\ref{ex:commatsl2}.  Since the rank of this matrix is two, it follows from Theorem~\ref{thm:commat} that $\mathfrak{g}$ is Frobenius.
\begin{figure}[H]
$$\begin{bmatrix}
   0 & 2x_2  \\
     -2x_2 & 0 
\end{bmatrix}$$
\caption{Commutator matrix}\label{ex:commatsl2}
\end{figure}
\end{Ex}

\begin{remark}
To ease notation, row and column labels of commutator matrices will be bolded and matrix entries will be unbolded. Furthermore, we will refer to the row corresponding to $\mathbf{x}$ in a commutator matrix -- and by a slight abuse of notation, in any equivalent matrix -- as row $\mathbf{x}$.
\end{remark}

Throughout this section, given a poset $\mathcal{P}$, we set $$C(\mathfrak{g}^{\prec}(\mathcal{P}))=([x_i,x_j])_{1\le i, j\le n}\text{, where }\{x_1,\hdots,x_n\}=\{E_{p_i,p_j}:p_i,p_j\in\mathcal{P},p_i\prec p_j\}.$$

\begin{theorem}
If $\mathcal{P}$ is a poset, then $\ind\mathfrak{g}^{\prec}(\mathcal{P})\ge |Rel_E(\mathcal{P})|$.
\end{theorem}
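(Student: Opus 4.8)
The plan is to invoke Theorem~\ref{thm:commat}, which reduces the claim to an upper bound on the rank of the commutator matrix. Since $\dim\mathfrak{g}^{\prec}(\mathcal{P})=|Rel(\mathcal{P})|$, the matrix $C(\mathfrak{g}^{\prec}(\mathcal{P}))$ is $|Rel(\mathcal{P})|\times|Rel(\mathcal{P})|$, and Theorem~\ref{thm:commat} gives $\ind\mathfrak{g}^{\prec}(\mathcal{P})=|Rel(\mathcal{P})|-\rank_{R(\mathfrak{g}^{\prec}(\mathcal{P}))}C(\mathfrak{g}^{\prec}(\mathcal{P}))$. Hence it suffices to prove that $\rank C(\mathfrak{g}^{\prec}(\mathcal{P}))\le |Rel(\mathcal{P})|-|Rel_E(\mathcal{P})|$, and I would do this by exhibiting $|Rel_E(\mathcal{P})|$ rows of $C(\mathfrak{g}^{\prec}(\mathcal{P}))$ that vanish identically.

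The key computation is the bracket of two matrix units. First I would record that for basis elements $E_{a,b}$ and $E_{c,d}$ one has $[E_{a,b},E_{c,d}]=\delta_{b,c}E_{a,d}-\delta_{d,a}E_{c,b}$. Reading off the row of $C(\mathfrak{g}^{\prec}(\mathcal{P}))$ indexed by $E_{p,q}$, the entry in column $E_{c,d}$ is therefore nonzero only if $c=q$ (contributing $E_{p,d}$, which requires an element $d$ with $q\prec d$) or $d=p$ (contributing $-E_{c,q}$, which requires an element $c$ with $c\prec p$).

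Next I would identify the vanishing rows. Observe that $Rel_E(\mathcal{P})$ consists precisely of the strict relations $p\prec q$ in which $p$ is minimal and $q$ is maximal: if $p,q\in Ext(\mathcal{P})$ and $p\prec q$, then $p$ cannot be maximal and $q$ cannot be minimal, forcing $p$ minimal and $q$ maximal. For such a relation there is no $d$ with $q\prec d$ and no $c$ with $c\prec p$, so by the previous paragraph every entry of the row indexed by $E_{p,q}$ is zero. These rows are pairwise distinct and number exactly $|Rel_E(\mathcal{P})|$, so $C(\mathfrak{g}^{\prec}(\mathcal{P}))$ has at least $|Rel_E(\mathcal{P})|$ zero rows, whence $\rank C(\mathfrak{g}^{\prec}(\mathcal{P}))\le |Rel(\mathcal{P})|-|Rel_E(\mathcal{P})|$, which is the desired inequality.

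The argument is direct, and the only place demanding care is the matrix-unit bracket identity together with the verification that the indexing set of vanishing rows is exactly $Rel_E(\mathcal{P})$; neither step is a genuine obstacle. I expect the real difficulty of the paper to lie in the reverse direction — establishing that this lower bound is attained, or sharpening it into an exact formula — rather than in this inequality, which captures only the ``free'' contribution coming from matrix units that cannot participate in any bracket.
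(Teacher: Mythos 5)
Your proposal is correct and follows essentially the same route as the paper: both arguments observe that each basis element $E_{p,q}$ with $p,q\in Ext(\mathcal{P})$ and $p\prec q$ commutes with every other basis element, so the corresponding rows of $C(\mathfrak{g}^{\prec}(\mathcal{P}))$ vanish, and Theorem~\ref{thm:commat} then yields the bound. Your write-up merely makes explicit the matrix-unit bracket computation and the observation that such $p$ must be minimal and $q$ maximal, details the paper leaves implicit.
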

\begin{proof}
Basis elements of the form $E_{p_i,p_j}$, for $p_i,p_j\in Ext(\mathcal{P})$ satisfying $p_i\prec p_j$, commute with all other elements of $\mathfrak{g}^{\prec}(\mathcal{P})$; that is, such elements correspond to zero rows in $\mathcal{C}(\mathfrak{g}^{\prec}(\mathcal{P}))$. The result follows.
\end{proof}

\begin{corollary}\label{cor:nofrob}
There are no Frobenius nilpotent Lie poset algebras.
\end{corollary}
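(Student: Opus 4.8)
The plan is to deduce the corollary directly from the preceding theorem, which gives $\ind\mathfrak{g}^{\prec}(\mathcal{P})\ge |Rel_E(\mathcal{P})|$. Since a Lie algebra is Frobenius precisely when its index is zero, it suffices to show that $|Rel_E(\mathcal{P})|\ge 1$ for every poset $\mathcal{P}$ for which $\mathfrak{g}^{\prec}(\mathcal{P})$ is a nonzero algebra, i.e.\ for every poset possessing at least one strict relation. The only excluded case is that of an antichain, where $\mathcal{P}$ carries no strict relations and $\mathfrak{g}^{\prec}(\mathcal{P})$ is the zero algebra; this degenerate case I would either flag explicitly or regard as trivial, since otherwise the zero algebra is vacuously Frobenius.

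The key step is to show that a single strict relation forces a strict relation between extremal elements. First I would fix any strict relation $a\prec b$ in $\mathcal{P}$, which exists by the nonvanishing assumption. Using finiteness of $\mathcal{P}$, I would then choose a minimal element $m$ with $m\preceq a$ and a maximal element $M$ with $b\preceq M$. Passing the chain $m\preceq a\prec b\preceq M$ through transitivity yields $m\prec M$, with strictness inherited from $a\prec b$; in particular $m\neq M$. Since $m,M\in Ext(\mathcal{P})$, the relation $m\prec M$ belongs to $Rel_E(\mathcal{P})$, so $|Rel_E(\mathcal{P})|\ge 1$. Combined with the theorem this gives $\ind\mathfrak{g}^{\prec}(\mathcal{P})\ge 1>0$, so $\mathfrak{g}^{\prec}(\mathcal{P})$ is not Frobenius.

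I do not anticipate a serious obstacle here; the argument is essentially routine. The only point requiring care is the bookkeeping at the extremes: one must confirm that extending $a$ downward and $b$ upward cannot collapse the relation, i.e.\ that strictness survives the transitive chain and that $m$ and $M$ remain distinct. As a more conceptual alternative, I would observe that the basis elements $E_{m,M}$ produced above are central in $\mathfrak{g}^{\prec}(\mathcal{P})$ -- they correspond to zero rows of $C(\mathfrak{g}^{\prec}(\mathcal{P}))$, exactly as in the proof of the theorem -- so that any nonzero nilpotent Lie poset algebra has nontrivial center and hence positive index. This route recovers, in the present setting, the classical fact that a nonzero nilpotent Lie algebra is never Frobenius.
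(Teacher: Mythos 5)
Your proof is correct and follows essentially the same route as the paper, which states the corollary as an immediate consequence of the preceding theorem $\ind\mathfrak{g}^{\prec}(\mathcal{P})\ge |Rel_E(\mathcal{P})|$ without further argument. Your write-up merely makes explicit the details the paper leaves implicit---that any strict relation $a\prec b$ extends via minimal and maximal elements to one in $Rel_E(\mathcal{P})$, forcing $|Rel_E(\mathcal{P})|\ge 1$, with the antichain case set aside as degenerate---and your alternative observation about central elements matches the zero-row argument used in the theorem's proof.
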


\begin{remark}
Corollary~\ref{cor:nofrob} holds more generally. In particular, in \textup{\textbf{\cite{Frobnil}}} it is shown that no real or complex, finite-dimensional nilpotent Lie algebra can be Frobenius.
\end{remark}

\begin{corollary}\label{cor:h1}
If $\mathcal{P}$ is a height-one poset, then $\ind\mathfrak{g}^{\prec}(\mathcal{P})= |Rel_E(\mathcal{P})|$.
\end{corollary}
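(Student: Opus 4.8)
The plan is to show that for a height-one poset the lower bound established in the preceding theorem is already forced to equal the full dimension of the algebra, so that equality is automatic. First I would make the key structural observation that in a height-one poset every element lies in $Ext(\mathcal{P})$. Indeed, if some $p\in\mathcal{P}$ were neither minimal nor maximal, then there would exist elements $q,r\in\mathcal{P}$ with $q\prec p\prec r$, producing a chain of cardinality three and hence a poset of height at least two, contrary to hypothesis. Thus $Ext(\mathcal{P})=\mathcal{P}$.

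Consequently, every strict relation of $\mathcal{P}$ is a relation between extremal elements, so $|Rel_E(\mathcal{P})|=|Rel(\mathcal{P})|$. Since $\{E_{p_i,p_j}:p_i\prec p_j\}$ is a basis of $\mathfrak{g}^{\prec}(\mathcal{P})$, the right-hand side is exactly $\dim\mathfrak{g}^{\prec}(\mathcal{P})$. The preceding theorem then gives $\ind\mathfrak{g}^{\prec}(\mathcal{P})\ge |Rel_E(\mathcal{P})|=\dim\mathfrak{g}^{\prec}(\mathcal{P})$. On the other hand, Theorem~\ref{thm:commat} expresses the index as $\dim\mathfrak{g}^{\prec}(\mathcal{P})-\rank$ of the commutator matrix, and a rank is nonnegative, so the index never exceeds the dimension. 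Combining the two inequalities yields the asserted equality.

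I expect essentially no obstacle here: the entire content is the structural remark that height one collapses $Ext(\mathcal{P})$ onto all of $\mathcal{P}$, after which the two bounds pinch. It is worth noting the deeper reason the estimate is tight, namely that $\mathfrak{g}^{\prec}(\mathcal{P})$ is abelian in this setting. A nonzero bracket $[E_{p_i,p_j},E_{p_k,p_l}]$ requires $p_j=p_k$ (or the symmetric condition), which would force this common element to be simultaneously maximal and minimal while still participating in strict relations—impossible. Hence the commutator matrix vanishes identically, its rank is zero, and the index equals the dimension directly via Theorem~\ref{thm:commat}. The only point genuinely demanding care is the verification that every element of a height-one poset is extremal, which is immediate from the definition of height.
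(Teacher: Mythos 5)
Your proposal is correct and follows essentially the same route as the paper: the paper's one-line proof likewise rests on the observation that in a height-one poset every basis element $E_{p_i,p_j}$ has $p_i,p_j\in Ext(\mathcal{P})$, so that the lower bound $|Rel_E(\mathcal{P})|$ from the preceding theorem coincides with $\dim\mathfrak{g}^{\prec}(\mathcal{P})$ (equivalently, the algebra is abelian and the commutator matrix vanishes). Your write-up merely makes explicit the pinching against the trivial upper bound $\ind\mathfrak{g}^{\prec}(\mathcal{P})\le\dim\mathfrak{g}^{\prec}(\mathcal{P})$ and the abelian structure, both of which are implicit in the paper's ``the result follows.''
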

\begin{proof}
All basis elements of $\mathfrak{g}^{\prec}(\mathcal{P})$ of the form $E_{p_i,p_j}$ satisfy $p_i,p_j\in Ext(\mathcal{P})$. The result follows.
\end{proof}

The first non-trivial result concerning the index of nilpotent Lie poset algebras corresponds to posets of height two. Before establishing the corresponding index formula, the following notation will prove helpful in the results that follow.

\begin{definition}
If $\mathcal{P}$ is a poset and $p\in \mathcal{P}$, then
$$D(\mathcal{P},p)=|\{q\in \mathcal{P}:q\prec p\}|,$$  $$U(\mathcal{P},p)=|\{q\in \mathcal{P}:p\prec q\}|,$$ $$\mathscr{B}^{p}(\mathcal{P})=\{E_{p,b}\in\mathfrak{g}^{\prec}(\mathcal{P}):b\text{ is maximal in }\mathcal{P}\text{ and }p\prec b\},$$
$$\mathscr{B}_{p}(\mathcal{P})=\{E_{l,p}\in\mathfrak{g}^{\prec}(\mathcal{P}):l\text{ is minimal in }\mathcal{P}\text{ and }l\prec p\},$$ $$U_E(\mathcal{P},p)=|\mathscr{B}^{p}(\mathcal{P})|,\text{ and}$$
$$D_E(\mathcal{P},p)=|\mathscr{B}_{p}(\mathcal{P})|.$$ 
\end{definition}

\begin{theorem}\label{thm:h2ind}
If $\mathcal{P}$ is a height-two poset, then $$\ind\mathfrak{g}^{\prec}(\mathcal{P})=|Rel(\mathcal{P})|-2\sum_{p\in\mathcal{P}\backslash Ext(\mathcal{P})}\min(D(\mathcal{P},p),U(\mathcal{P},p)).$$
\end{theorem}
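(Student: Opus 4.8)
The plan is to exploit the rigid structure that the height-two hypothesis forces on the strict relations of $\mathcal{P}$, and then read the rank of $C(\mathfrak{g}^{\prec}(\mathcal{P}))$ off directly via Theorem~\ref{thm:commat}. First I would record the key structural observation: if $p\in\mathcal{P}\backslash Ext(\mathcal{P})$ is a non-extremal (``middle'') element, then since every chain has at most three elements, each $q$ with $q\prec p$ must be minimal and each $q$ with $p\prec q$ must be maximal; moreover there can be no relation between two middle elements (such a relation would sit inside a chain of four). Consequently the elements of $Rel(\mathcal{P})$ fall into exactly three classes: type $D$ relations $l\prec p$ with $l$ minimal and $p$ middle, type $U$ relations $p\prec m$ with $p$ middle and $m$ maximal, and type $E$ relations $l\prec m$ between extremal elements (the elements of $Rel_E(\mathcal{P})$). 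In particular, for a middle element $p$ every element below it is minimal and every element above it is maximal.

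Next I would compute every Lie bracket on the basis, using $[E_{a,b},E_{c,d}]=E_{a,d}\delta_{b,c}-E_{c,b}\delta_{d,a}$. The only brackets surviving the height-two constraints are $[E_{l,p},E_{p,m}]=E_{l,m}$ (a type $E$ basis element) for a type $D$ element and a type $U$ element sharing the middle element $p$; brackets within type $D$, within type $U$, and any bracket involving a type $E$ element vanish. Ordering the basis as type $D$, then type $U$, then type $E$, this exhibits
$$C(\mathfrak{g}^{\prec}(\mathcal{P}))=\begin{pmatrix} 0 & M & 0 \\ -M^{T} & 0 & 0 \\ 0 & 0 & 0\end{pmatrix},$$
where $M$ has rows indexed by type $D$ elements, columns indexed by type $U$ elements, and $(E_{l,p},E_{p',m})$-entry equal to the variable attached to $E_{l,m}$ when $p=p'$ and $0$ otherwise.

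Then I would reduce the rank computation to a block-diagonal one. Since $\rank C(\mathfrak{g}^{\prec}(\mathcal{P}))=2\,\rank M$ (the rank of a block matrix $\left(\begin{smallmatrix}0&M\\-M^{T}&0\end{smallmatrix}\right)$ is twice $\rank M$, and the type $E$ zero rows and columns contribute nothing), it remains to compute $\rank M$. Because the $(E_{l,p},E_{p',m})$-entry is nonzero only when $p=p'$, sorting rows and columns by their middle element displays $M$ as block-diagonal, $M=\bigoplus_{p\in\mathcal{P}\backslash Ext(\mathcal{P})}M^{(p)}$, where $M^{(p)}$ is a $D(\mathcal{P},p)\times U(\mathcal{P},p)$ matrix whose $(l,m)$-entry is the variable attached to $E_{l,m}$. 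Since $l\prec p\prec m$ forces $l\prec m$, every such entry is a genuine variable, and the entries within $M^{(p)}$ are pairwise distinct indeterminates; hence $M^{(p)}$ is a generic matrix with $\rank M^{(p)}=\min(D(\mathcal{P},p),U(\mathcal{P},p))$ over $R(\mathfrak{g}^{\prec}(\mathcal{P}))$. Summing over blocks and using $n=|Rel(\mathcal{P})|$ then gives $\ind\mathfrak{g}^{\prec}(\mathcal{P})=n-2\,\rank M=|Rel(\mathcal{P})|-2\sum_{p\in\mathcal{P}\backslash Ext(\mathcal{P})}\min(D(\mathcal{P},p),U(\mathcal{P},p))$.

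The main obstacle is the structural bookkeeping rather than the linear algebra: one must verify carefully that the height-two hypothesis genuinely eliminates all other brackets — in particular that there are no middle-to-middle relations and no brackets producing output outside type $E$ — and that within each block $M^{(p)}$ the entries are truly distinct, algebraically independent variables so that the rank is exactly $\min(D,U)$ with no accidental cancellation. The $2\,\rank M$ identity and the additivity of rank across the blocks are then routine; a small point worth confirming is that a variable shared between two distinct blocks $M^{(p)}$ and $M^{(p')}$ (which occurs precisely when $l\prec p,p'\prec m$) does not disturb the block-diagonal rank computation, as the rank of a block-diagonal matrix is additive regardless of which indeterminates appear in the blocks.
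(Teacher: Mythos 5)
Your proposal is correct and follows essentially the same route as the paper's own proof: both exhibit $C(\mathfrak{g}^{\prec}(\mathcal{P}))$, after a suitable ordering of the basis, as a block structure built from the matrices $M_p$ (rows indexed by $\mathscr{B}_p$, columns by $\mathscr{B}^p$) and their negative transposes, observe that each block consists of distinct indeterminates and hence has maximal rank $\min(D(\mathcal{P},p),U(\mathcal{P},p))$, and conclude via Theorem~\ref{thm:commat}. Your explicit verification of the height-two structural facts (no middle-to-middle relations, all brackets vanish except $[E_{l,p},E_{p,m}]=E_{l,m}$) and your remark that block-diagonal rank additivity is unaffected by variables shared across blocks make explicit some points the paper leaves implicit, but the decomposition and the rank computation are the same.
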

\begin{proof}
Arrange for the row labels of $C(\mathfrak{g}^{\prec}(\mathcal{P}))$ to be ordered as follows.
\begin{itemize}
    \item First, for $p\in \mathcal{P}\backslash Ext(\mathcal{P})$ in increasing order of $p$ in $\mathbb{Z}$, list the elements of each set $\mathscr{B}_p$ in increasing lexicographic order of their indices in $\mathbb{Z}^2$.
    \item Next, for $p\in \mathcal{P}\backslash Ext(\mathcal{P})$ in increasing order of $p$ in $\mathbb{Z}$, list the elements of each set $\mathscr{B}^p$ in increasing lexicographic order of their indices in $\mathbb{Z}^2$.
    \item Finally, list basis elements of the form $E_{l,b}$, for $l,b\in Ext(\mathcal{P})$ satisfying $l\prec b$, in increasing lexicographic order of $(l,b)$ in $\mathbb{Z}^2$.
\end{itemize}
Similarly, arrange for the columns labels of $C(\mathfrak{g}^{\prec}(\mathcal{P}))$ to be ordered as follows.
\begin{itemize}
    \item First, for $p\in \mathcal{P}\backslash Ext(\mathcal{P})$ in increasing order of $p$ in $\mathbb{Z}$, list the elements of each set $\mathscr{B}^p$ in increasing lexicographic order of their indices in $\mathbb{Z}^2$.
    \item Next, for $p\in \mathcal{P}\backslash Ext(\mathcal{P})$ in increasing order of $p$ in $\mathbb{Z}$, list the elements of each set $\mathscr{B}_p$ in increasing lexicographic order of their indices in $\mathbb{Z}^2$.
    \item Finally, list basis elements of the form $E_{l,b}$, for $l,b\in Ext(\mathcal{P})$ satisfying $l\prec b$, in increasing lexicographic order of $(l,b)$ in $\mathbb{Z}^2$.
\end{itemize}
Assuming the given ordering of row and column labels, $C(\mathfrak{g}^{\prec}(\mathcal{P}))$ is block diagonal with blocks $M_p$ and $-M_p^T$, where $M_p$ (resp., $-M_p$) is defined by row labels coming from $\mathscr{B}_p$ (resp., $\mathscr{B}^p$) and column labels coming from $\mathscr{B}^p$ (resp., $\mathscr{B}_p$). By definition, the entries of $M_p$ (resp., $-M_p^T$) are all distinct and nonzero. Thus, the rank of $M_p$ (resp., $M_p^T$) must be maximal. Since $M_p$ (resp., $M_p^T$) has $D(\mathcal{P},p)$ rows (resp., columns) and $U(\mathcal{P},p)$ columns (resp., rows), the rank of $C(\mathfrak{g}^{\prec}(\mathcal{P}))$ is given by $2\sum_{p\in\mathcal{P}\backslash Ext(\mathcal{P})}\min(D(\mathcal{P},p),U(\mathcal{P},p))$. Applying Theorem~\ref{thm:commat} establishes the result.
\end{proof}

\begin{Ex}
Let $\mathcal{P}$ be the poset given in Example~\ref{ex:posnot}; that is, $\mathcal{P}=\{1,2,3,4,5,6\}$ with $1,2\preceq 3\preceq 4,5,6$. The commutator matrix $C(\mathfrak{g}^{\prec}(\mathcal{P}))$ is illustrated in Figure~\ref{ex:h2commat}, assuming the ordering of the row and column labels as in the proof of Theorem~\ref{thm:h2ind}. Further, note that since $$|Rel(\mathcal{P})|=11,\quad \mathcal{P}\backslash Ext(\mathcal{P})=\{3\},\quad U(\mathcal{P},3)=3,\quad D(\mathcal{P},3)=2,\quad\text{and}\quad min(U(\mathcal{P},3),D(\mathcal{P},3))=2,$$ applying Theorem~\ref{thm:h2ind} shows that $\ind\mathfrak{g}^{\prec}(\mathcal{P})=11-2(2)=7$.
\begin{figure}[H]
$$\kbordermatrix{
    & \mathbf{E_{3,4}} & \mathbf{E_{3,5}} & \mathbf{E_{3,6}} & \mathbf{E_{1,3}} & \mathbf{E_{2,3}} & \mathbf{E_{1,4}} & \mathbf{E_{1,5}} & \mathbf{E_{1,6}} & \mathbf{E_{2,4}} & \mathbf{E_{2,5}} & \mathbf{E_{2,6}}  \\
   \mathbf{E_{1,3}} & E_{1,4} & E_{1,5} & E_{1,6} & 0 & 0 & 0 & 0 & 0 & 0 & 0 & 0   \\
   \mathbf{E_{2,3}} & E_{2,4} & E_{2,5} & E_{2,6} & 0 & 0 & 0 & 0 & 0 & 0 & 0 & 0  \\
   \mathbf{E_{3,4}} & 0 & 0 & 0 & -E_{1,4} & -E_{2,4} & 0 & 0 & 0 & 0 & 0 & 0  \\
   \mathbf{E_{3,5}} & 0 & 0 & 0 & -E_{1,5} & -E_{2,5} & 0 & 0 & 0 & 0 & 0 & 0  \\
   \mathbf{E_{3,6}} & 0 & 0 & 0 & -E_{1,6} & -E_{2,6} & 0 & 0 & 0 & 0 & 0 & 0  \\
   \mathbf{E_{1,4}} & 0 & 0 & 0 & 0 & 0 & 0 & 0 & 0  & 0 & 0 & 0  \\
   \mathbf{E_{1,5}} & 0 & 0 & 0 & 0 & 0 & 0 & 0 & 0  & 0 & 0 & 0  \\
   \mathbf{E_{1,6}} & 0 & 0 & 0 & 0 & 0 & 0 & 0 & 0  & 0 & 0 & 0  \\
   \mathbf{E_{2,4}} & 0 & 0 & 0 & 0 & 0 & 0 & 0 & 0  & 0 & 0 & 0  \\
   \mathbf{E_{2,5}} & 0 & 0 & 0 & 0 & 0 & 0 & 0 & 0  & 0 & 0 & 0  \\
   \mathbf{E_{2,6}} & 0 & 0 & 0 & 0 & 0 & 0 & 0 & 0  & 0 & 0 & 0  \\
  }$$
\caption{$C(\mathfrak{g}^{\prec}(\mathcal{P}))$}\label{ex:h2commat}
\end{figure}
\end{Ex}

In Theorem~\ref{thm:generalind} below, we determine a height-independent formula for the index of nilpotent Lie poset algebras. This is done through the use of an inductive argument with Corollary~\ref{cor:h1} and Theorem~\ref{thm:h2ind} covering the first two base cases. In the proof, given a height $n$ poset $\mathcal{P}$, we are able to recursively construct a height $n-1$ poset $\mathcal{P}_m$ for which $\rank\text{ }C(\mathfrak{g}^{\prec}(\mathcal{P}))=\rank\text{ }C(\mathfrak{g}^{\prec}(\mathcal{P}_m))$. Of particular importance in the argument is the set of ``middle" sections of maximal chains, which we make precise as follows: if $\mathcal{P}$ is a height-$n$ poset, then $$\mathscr{M}_n(\mathcal{P})=\{\{p_1\prec\hdots\prec p_{n-1}\}\subset\mathcal{P}:\exists~p_0,p_n\in\mathcal{P}\text{ such that }p_0\prec p_1\prec\hdots\prec p_{n-1}\prec p_n\}.$$
\noindent
Further, the construction of $\mathcal{P}_m$ from $\mathcal{P}$ is guided by row operations on the commutator matrix $C(\mathfrak{g}^{\prec}(\mathcal{P}))$ which correspond to splitting a row $r$ of $C(\mathfrak{g}^{\prec}(\mathcal{P}))$ into two separate rows $r_1$ and $r_2$ satisfying $r_1+r_2=r$; to aid discourse of such row operations, we make the following definition.

\begin{definition}
Let $M$ be an $n\times n$ matrix with row and column labels $L=\{l_1,\hdots,l_n\}$, and let $r=(r_{l_1},\hdots,r_{l_n})$ be a row of $M$. We define the restriction of $r$ to columns $S\subset L$ to be the vector $r_S=(r'_{l_1},\hdots,r'_{l_n})$ with $r'_{l_i}=r_{l_i}$ if $l_i\in S$, and $r'_{l_i}=0$ otherwise.
\end{definition}

\begin{theorem}\label{thm:generalind}
If $\mathcal{P}$ is a poset, then
\begin{equation}\label{eqn:mainthm}
\ind\mathfrak{g}^{\prec}(\mathcal{P})=|Rel(\mathcal{P})|-2\sum_{p\in\mathcal{P}\backslash Ext(\mathcal{P})}\min(D(\mathcal{P},p),U(\mathcal{P},p)).
\end{equation}
\end{theorem}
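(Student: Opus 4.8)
The plan is to reduce to a rank computation and then induct on the height of $\mathcal{P}$. By Theorem~\ref{thm:commat}, and since the basis $\{E_{p_i,p_j}:p_i\prec p_j\}$ has cardinality $n=|Rel(\mathcal{P})|$, the identity \eqref{eqn:mainthm} is equivalent to the assertion that $\rank C(\mathfrak{g}^{\prec}(\mathcal{P}))=I(\mathcal{P})$, where I write $I(\mathcal{P})=2\sum_{p\in\mathcal{P}\backslash Ext(\mathcal{P})}\min(D(\mathcal{P},p),U(\mathcal{P},p))$ for the target quantity. So I would prove this rank identity directly. First I would record the explicit entries of the commutator matrix: the entry in row $\mathbf{E_{p,q}}$ and column $\mathbf{E_{r,s}}$ equals $E_{p,s}$ when $q=r$, equals $-E_{r,q}$ when $s=p$, and is $0$ otherwise. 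In particular $C(\mathfrak{g}^{\prec}(\mathcal{P}))$ is skew-symmetric, every relation $E_{l,b}$ with $l,b\in Ext(\mathcal{P})$ indexes a zero row and column, and each off-diagonal entry records a length-two chain $p\prec q\prec s$ whose value depends only on its endpoints.

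Next I would set up induction on the height $h$ of $\mathcal{P}$, with Corollary~\ref{cor:h1} ($h=1$) and Theorem~\ref{thm:h2ind} ($h=2$) as base cases. For the inductive step ($h\ge 3$) I would carry out the surgery flagged before the statement: using the middle sections $\mathscr{M}_h(\mathcal{P})$ to locate the relations running through the interior of the longest chains, I would build a height-$(h-1)$ poset $\mathcal{P}_m$ by detaching the top level of those chains, a modification dictated by splitting each such row $r$ into its ``upward'' restriction $r_S$ and its ``downward'' restriction $r_{S^c}$, where $S$ collects the columns $\mathbf{E_{q,s}}$ and $S^c$ the columns $\mathbf{E_{r,p}}$, so that $r_S+r_{S^c}=r$. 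The induction then closes once I verify two claims: (a) the row-splitting operations neither raise nor lower the rank, so $\rank C(\mathfrak{g}^{\prec}(\mathcal{P}))=\rank C(\mathfrak{g}^{\prec}(\mathcal{P}_m))$; and (b) the surgery preserves the target quantity, $I(\mathcal{P})=I(\mathcal{P}_m)$. Combining (a), (b), and the inductive hypothesis $\rank C(\mathfrak{g}^{\prec}(\mathcal{P}_m))=I(\mathcal{P}_m)$ gives $\rank C(\mathfrak{g}^{\prec}(\mathcal{P}))=I(\mathcal{P})$, as required.

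I expect claim (b) to be a bookkeeping exercise comparing the $D$- and $U$-values of the interior elements before and after the surgery, while the genuine obstacle is claim (a). The difficulty is that the entries of $C(\mathfrak{g}^{\prec}(\mathcal{P}))$ are far from algebraically independent: a single indeterminate $E_{a,b}$ occupies the $(\mathbf{E_{a,q}},\mathbf{E_{q,b}})$-slot for \emph{every} intermediate $q$ with $a\prec q\prec b$, so the rank cannot be read off from a naive matching of nonzero positions, and splitting a row $r$ into $r_S+r_{S^c}$ could a priori raise the rank by one. I would therefore have to show that, for the precise rows selected by the middle sections and the precise column partition into upward versus downward relations, each piece $r_S$ lies in the span of the retained rows together with $r_{S^c}$ precisely because of this repeated-indeterminate pattern, so that neither a new independent direction is created nor an existing one destroyed. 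Concretely, I would track over the function field $R(\mathfrak{g}^{\prec}(\mathcal{P}))$ a submatrix realizing the rank, and then recognize the matrix produced by all the splittings as $C(\mathfrak{g}^{\prec}(\mathcal{P}_m))$ up to a rank-preserving rearrangement. Carrying out this identification uniformly across all maximal chains at once—rather than one chain at a time, where re-entrancy between overlapping chains could spoil the independence argument—is where I anticipate the argument needing the most care.
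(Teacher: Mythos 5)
Your overall strategy coincides with the paper's: reduce to a rank computation via Theorem~\ref{thm:commat}, induct on height with Corollary~\ref{cor:h1} and Theorem~\ref{thm:h2ind} as base cases, and perform a row-splitting surgery guided by $\mathscr{M}_n(\mathcal{P})$ that produces a lower-height poset with the same interior data. You also correctly identify where the difficulty lies. But you leave that difficulty --- your claim (a) --- unresolved, and the route you sketch toward it would not close. The condition you propose to verify, that each upward piece $r_S$ lies in the span of the retained rows together with $r_{S^c}$, is not even the right condition: already for the chain $l\prec p_1\prec p_2\prec b_1,b_2$ the upward piece of row $\mathbf{E_{p_1,p_2}}$ lies outside the span of the retained rows and $r_{S^c}$, although the splitting there does preserve rank (the correct sufficient condition is that one piece lie in the span of \emph{all} original rows, including the row being split). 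Nor is ``the repeated-indeterminate pattern'' the mechanism that makes any version of this true. What actually drives the paper's proof is a dimension count on an extremal submatrix: since every element above $p_{n-1}$ is maximal, the rows labeled by $\mathscr{B}_{p_{n-1}}(\mathcal{P})$ are supported exactly on the columns labeled by $\mathscr{B}^{p_{n-1}}(\mathcal{P})$, where they form a $D_E(\mathcal{P},p_{n-1})\times U_E(\mathcal{P},p_{n-1})$ matrix $M_{p_{n-1}}$ of pairwise distinct indeterminates, hence of maximal rank. \emph{If} $D_E(\mathcal{P},p_{n-1})\ge U_E(\mathcal{P},p_{n-1})$, this rank equals the number of columns, so those rows span the entire coordinate subspace of the $\mathscr{B}^{p_{n-1}}(\mathcal{P})$ columns; only then is every split-off upward piece automatically in the row space, making the splitting rank-neutral and licensing the subsequent relabeling of entries.

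The essential idea you are missing is the case analysis that handles the failure of this inequality. When $D_E(\mathcal{P},p_{n-1})<U_E(\mathcal{P},p_{n-1})$, the rows of $M_{p_{n-1}}$ span a proper subspace of those columns, and the upward piece of a row $\mathbf{E_{q,p_{n-1}}}$ is provably \emph{not} in their span: adjoining it yields a $\bigl(D_E(\mathcal{P},p_{n-1})+1\bigr)\times U_E(\mathcal{P},p_{n-1})$ matrix of distinct indeterminates, whose rank strictly exceeds $\rank M_{p_{n-1}}$. So your plan of always ``detaching the top level'' has no justification in this case. The paper instead observes that every maximal element above $p_{n-1}$ lies above $p_1$ and every minimal element below $p_1$ lies below $p_{n-1}$, whence $U_E(\mathcal{P},p_1)\ge U_E(\mathcal{P},p_{n-1})>D_E(\mathcal{P},p_{n-1})\ge D_E(\mathcal{P},p_1)$, and it runs the mirror-image surgery at the \emph{bottom} of the middle section, splitting the rows $\mathbf{E_{p_1,q}}$ against the columns $\mathscr{B}_{p_1}(\mathcal{P})$. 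Without this top/bottom dichotomy, claim (a) is not established. Two smaller points: your claim (b) is indeed routine, precisely because the surgery only detaches relations onto newly created extremal elements, leaving $U$ and $D$ unchanged on $\mathcal{P}\setminus Ext(\mathcal{P})$; and your closing worry about treating all maximal chains simultaneously is moot, since the paper processes one middle section at a time, each step yielding a genuine poset $\mathcal{P}_1$ with $|\mathscr{M}_n(\mathcal{P}_1)|<|\mathscr{M}_n(\mathcal{P})|$, so the iteration terminates.
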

\begin{proof}
By induction on the height of $\mathcal{P}$. The result holds for height-one and height-two posets, as shown in Corollary~\ref{cor:h1} and Theorem~\ref{thm:h2ind}, respectively. Assume the result holds for posets of height at most $n-1\ge 2$, let $\mathcal{P}$ be a poset of height $n$, and let $$\mathscr{B}(\mathcal{P})=\{E_{p_i,p_j}:p_i,p_j\in\mathcal{P},p_i\prec p_j\}.$$ Note then that there exists $\{p_1\prec\hdots\prec p_{n-1}\}\in\mathscr{M}_n(\mathcal{P})$. Further, the rows of $C(\mathfrak{g}^{\prec}(\mathcal{P}))$ labeled by elements from the set $\mathscr{B}_{p_{n-1}}(\mathcal{P})$ are equal to their restrictions to columns labeled by elements of $\mathscr{B}^{p_{n-1}}(\mathcal{P})$. Similarly, the rows of $C(\mathfrak{g}^{\prec}(\mathcal{P}))$ labeled by elements from the set $\mathscr{B}^{p_{1}}(\mathcal{P})$ are equal to their restrictions to columns labeled by elements of $\mathscr{B}_{p_{1}}(\mathcal{P})$. Now, denote by $M_{p_{n-1}}$ the submatrix of $C(\mathfrak{g}^{\prec}(\mathcal{P}))$ defined by rows labeled by elements of $\mathscr{B}_{p_{n-1}}(\mathcal{P})$ and columns labeled by elements of $\mathscr{B}^{p_{n-1}}(\mathcal{P}).$ Similarly, denote by $M_{p_1}$ the submatrix of $C(\mathfrak{g}^{\prec}(\mathcal{P}))$ defined by rows labeled by elements of $\mathscr{B}^{p_{1}}(\mathcal{P})$ and columns labeled by elements of $\mathscr{B}_{p_{1}}(\mathcal{P})$. Thus, $C(\mathfrak{g}^{\prec}(\mathcal{P}))$ contains the $D_E(\mathcal{P},p_{n-1})\times U_E(\mathcal{P},p_{n-1})$ submatrix $M_{p_{n-1}}$ and the $U_E(\mathcal{P},p_1)\times D_E(\mathcal{P},p_1)$ submatrix $M_{p_1}$; moreover, each of $M_{p_{n-1}}$ and $M_{p_1}$ has all entries nonzero and pairwise unequal. The proof now breaks into two cases.
\\*

\noindent
\textbf{Case 1}: $D_E(\mathcal{P},p_{n-1})\ge U_E(\mathcal{P},p_{n-1})$. Now note that the columns of $C(\mathfrak{g}^{\prec}(\mathcal{P}))$ may be organized so that the collection of rows indexed by elements from the set $\mathscr{B}_{p_{n-1}}(\mathcal{P})$ takes the form 
$$R=\begin{bmatrix}
\huge{M_{p_{n-1}}} & \huge{0} & \huge{0} & \dots & \huge{0}
\end{bmatrix}.$$ Since $M_{p_{n-1}}$ occupies exactly the columns indexed by elements from the set $\mathscr{B}^{p_{n-1}}(\mathcal{P})$ and has maximal rank, $(\ast)$ the restriction of all rows in $C(\mathfrak{g}^{\prec}(\mathcal{P}))$ to columns indexed by elements in $\mathscr{B}^{p_{n-1}}(\mathcal{P})$ is spanned by $R.$ Also notice that the only rows with nonzero entries in columns labeled by elements of $\mathscr{B}^{p_{n-1}}(\mathcal{P})$ are $\mathbf{E_{q,p_{n-1}}}$, where $q\in\mathcal{P}$ satisfies $q\prec p_{n-1}$. Now, consider the subset of those rows $\mathbf{E_{q,p_{n-1}}}$ with $q\prec p_{n-1}$ and $q\in\mathcal{P}\setminus Ext(\mathcal{P}).$ Such rows only have nonzero entries in columns with labels from the set $\mathscr{B}^{p_{n-1}}(\mathcal{P})$ and labels of the form $\mathbf{E_{r,q}}$, for $r\in\mathcal{P}$ satisfying $r\prec q\prec p_{n-1}$. By $(\ast)$, the rank of $C(\mathfrak{g}^{\prec}(\mathcal{P}))$ is unaltered by splitting such rows into two rows, one corresponding to the restriction to columns labeled by $\mathscr{B}^{p_{n-1}}(\mathcal{P})$ (denote by $\mathbf{E_{q_{p_{n-1}},p_{n-1}}}$), and the other corresponding to the restriction to columns labeled by elements of $\mathscr{B}(\mathcal{P})\backslash \mathscr{B}^{p_{n-1}}(\mathcal{P})$ (denote by $\mathbf{E_{q,p'_{n-1}}}$). Additionally, since row $\mathbf{E_{q_{p_{n-1}},p_{n-1}}}$ is in the span of $R$, the rank is unaffected by replacing labels $E_{q,r},$ for $q\prec p_{n-1}\prec r$, in row $\mathbf{E_{q_{p_{n-1}},p_{n-1}}}$ with $E_{q_{p_{n-1}},r}.$ Further, as there are no two nonzero equal entries in any row \textup(resp., column\textup) of $C(\mathfrak{g}^{\prec}(\mathcal{P}))$, the rank is also not altered by relabeling the entries $-E_{r,p_{n-1}}$, for $r\prec q\prec p_{n-1}$, of row $\mathbf{E_{q,p'_{n-1}}}$ by $-E_{r,p'_{n-1}}$. Now, simply take the transpose of $C(\mathfrak{g}^{\prec}(\mathcal{P}))$, multiply by $-1$, and perform the same row operations; note, this is well-defined since the rows altered/added in the above algorithm contribute entries of zero to the rows of interest in the transpose. Removing zero rows and columns, the resulting matrix is $C(\mathfrak{g}^{\prec}(\mathcal{P}_1))$ (with zero rows and columns removed), where the poset $\mathcal{P}_1$ is constructed from $\mathcal{P}$ as follows: 
\begin{itemize}
    \item for each $q\in\mathcal{P}\setminus Ext(\mathcal{P})$ satisfying $q\prec p_{n-1}$ remove the relation $q\prec p_{n-1}$ and add a new minimal element $q_{p_{n-1}}\prec p_{n-1}$; and
    \item add a new maximal element $p_{n-1}'$ which satisfies $q\prec p_{n-1}'$, for all $q\in\mathcal{P}$ such that $q\prec_{\mathcal{P}} p_{n-1}$.
\end{itemize}
By construction, $\rank C(\mathfrak{g}^{\prec}(\mathcal{P}))=\rank C(\mathfrak{g}^{\prec}(\mathcal{P}_1)).$ Note also that $\mathcal{P}\setminus Ext(\mathcal{P})=\mathcal{P}_1\setminus Ext(\mathcal{P}_1),$ 
\begin{equation}\label{eqn:c1}
    U(\mathcal{P},p)=U(\mathcal{P}_1,p)\text{ and }D(\mathcal{P},p)=D(\mathcal{P}_1,p)\text{, for all }p\in\mathcal{P}\backslash Ext(\mathcal{P}),
\end{equation}
and $|\mathscr{M}_n(\mathcal{P}_1)|<|\mathscr{M}_n(\mathcal{P})|$.
\\*

\noindent
\textbf{Case 2}: $D_E(\mathcal{P},p_{n-1})< U_E(\mathcal{P},p_{n-1})$. In this case, $$U_E(\mathcal{P},p_1)\ge U_E(\mathcal{P},p_{n-1})>D_E(\mathcal{P},p_{n-1})\ge 
D_E(\mathcal{P},p_1).$$ Note that the columns of $C(\mathfrak{g}^{\prec}(\mathcal{P}))$ may be organized so that the collection of rows indexed by elements from the set $\mathscr{B}^{p_1}(\mathcal{P})$ takes the form 
$$R'=\begin{bmatrix}
\huge{M_{p_1}} & \huge{0} & \huge{0} & \dots & \huge{0}
\end{bmatrix}.$$ Since $M_{p_1}$ occupies exactly the columns indexed by elements from the set $\mathscr{B}_{p_1}(\mathcal{P})$ and has maximal rank, $(\ast\ast)$ the restriction of all rows in $C(\mathfrak{g}^{\prec}(\mathcal{P}))$ to columns indexed by elements in $\mathscr{B}_{p_1}(\mathcal{P})$ is spanned by $R'.$ Also notice that the only rows with nonzero entries in columns labeled by elements of $\mathscr{B}_{p_1}(\mathcal{P})$ are $\mathbf{E_{p_1,q}}$, where $q\in\mathcal{P}$ satisfies $p_1\prec q$. Now, consider the subset of those rows $\mathbf{E_{p_1,q}}$ with $p_1\prec q$ and $q\in\mathcal{P}\setminus Ext(\mathcal{P}).$ Such rows only have nonzero entries in columns with labels from the set $\mathscr{B}_{p_1}(\mathcal{P})$ and labels of the form $\mathbf{E_{q,r}}$, for $r\in\mathcal{P}$ satisfying $p_1\prec q\prec r$. By $(\ast\ast)$, the rank of $C(\mathfrak{g}^{\prec}(\mathcal{P}))$ is unaltered by splitting such rows into two rows, one corresponding to the restriction to columns labeled by $\mathscr{B}_{p_1}(\mathcal{P})$ (denote by $\mathbf{E_{p_1,q^{p_1}}}$), and the other corresponding to the restriction to columns labeled by elements of $\mathscr{B}(\mathcal{P})\backslash \mathscr{B}_{p_1}(\mathcal{P})$ (denote by $\mathbf{E_{p'_1,q}}$). Additionally, since row $\mathbf{E_{p_1,q^{p_1}}}$ is in the span of $R$, the rank is unaffected by replacing labels $-E_{r,q},$ for $r\prec p_1\prec q$, in row $\mathbf{E_{p_1,q^{p_1}}}$ with $-E_{r,q^{p_1}}.$ Further, as there are no two nonzero equal entries in any row \textup(resp., column\textup) of $C(\mathfrak{g}^{\prec}(\mathcal{P}))$, the rank is also not altered by relabeling the entries $E_{p_1, r}$, for $p_1\prec q\prec r$, of row $\mathbf{E_{p'_1,q}}$ by $E_{p'_1,r}$. Now, simply take the transpose of $C(\mathfrak{g}^{\prec}(\mathcal{P}))$, multiply by $-1$, and perform the same row operations; note, this is well-defined since the rows altered/added in the above algorithm contribute entries of zero to the rows of interest in the transpose. Removing zero rows and columns, the resulting matrix is $C(\mathfrak{g}^{\prec}(\mathcal{P}_1))$ (with zero rows and columns removed), where the poset $\mathcal{P}_1$ is constructed from $\mathcal{P}$ as follows: 
\begin{itemize}
    \item for each $q\in\mathcal{P}\setminus Ext(\mathcal{P})$ satisfying $p_1\prec q$ remove the relation $p_1\prec q$ and add a new maximal element $p_1\prec q^{p_1}$; and
    \item add a new minimal element $p'_1$ which satisfies $p'_1\prec q$, for all $q\in\mathcal{P}$ such that $p_1\prec_{\mathcal{P}} q$.
\end{itemize}
By construction, $\rank C(\mathfrak{g}^{\prec}(\mathcal{P}))=\rank C(\mathfrak{g}^{\prec}(\mathcal{P}_1)).$ Note also that $\mathcal{P}\setminus Ext(\mathcal{P})=\mathcal{P}_1\setminus Ext(\mathcal{P}_1),$ 
\begin{equation}\label{eqn:c2}
    U(\mathcal{P},p)=U(\mathcal{P}_1,p)\text{ and }D(\mathcal{P},p)=D(\mathcal{P}_1,p)\text{, for all }p\in\mathcal{P}\backslash Ext(\mathcal{P}),
\end{equation}
and $|\mathscr{M}_n(\mathcal{P}_1)|<|\mathscr{M}_n(\mathcal{P})|$.
\\*

\noindent
Since $\mathcal{P}$ is finite, one can perform the above operations inductively until one arrives at a poset $\mathcal{P}_m$ satisfying $\mathscr{M}_n(\mathcal{P}_m)=\emptyset$; that is, $\mathcal{P}_m$ is of height $<n$. Further, $$rank\text{ }C(\mathfrak{g}^{\prec}(\mathcal{P}))=rank\text{ }C(\mathfrak{g}^{\prec}(\mathcal{P}_m)).$$ Considering $\mathcal{P}\backslash Ext(\mathcal{P})=\mathcal{P}_m\backslash Ext(\mathcal{P}_m)$, (\ref{eqn:c1}), and (\ref{eqn:c2}), the result follows.
\end{proof}

\begin{Ex}
Let $\mathcal{P}=\{1,2,3,4,5,6,7\}$ with $1,2\prec 3\prec 5\prec 6,7$ and $2\prec 4\prec 7$. In Figure~\ref{fig:matpremap} we illustrate the commutator matrix $C(\mathfrak{g}^{\prec}(\mathcal{P}))$ with zero rows and columns removed.

\begin{figure}[H]
$$\kbordermatrix{
    & \mathbf{E_{5,6}} & \mathbf{E_{5,7}} & \mathbf{E_{1,3}} & \mathbf{E_{2,3}} & \mathbf{E_{4,7}} & \mathbf{E_{2,4}} & \mathbf{E_{3,5}} & \mathbf{E_{1,5}} & \mathbf{E_{2,5}} & \mathbf{E_{3,6}} & \mathbf{E_{3,7}}   \\
   \mathbf{E_{1,5}} & E_{1,6} & E_{1,7} & 0 & 0 & 0 & 0 & 0 & 0 & 0 & 0 & 0    \\
   \mathbf{E_{2,5}} & E_{2,6} & E_{2,7} & 0 & 0 & 0 & 0 & 0 & 0 & 0 & 0 & 0   \\
   \mathbf{E_{3,6}} & 0 & 0 & -E_{1,6} & -E_{2,6} & 0 & 0 & 0 & 0 & 0 & 0 & 0  \\
   \mathbf{E_{3,7}} & 0 & 0 & -E_{1,7} & -E_{2,7} & 0 & 0 & 0 & 0 & 0 & 0 & 0   \\
   \mathbf{E_{3,5}} & E_{3,6} & E_{3,7} & -E_{1,5} & -E_{2,5} & 0 & 0 & 0 & 0 & 0 & 0 & 0   \\
   \mathbf{E_{2,4}} & 0 & 0 & 0 & 0 & E_{2,7} & 0 & 0 & 0 & 0 & 0 & 0   \\
   \mathbf{E_{4,7}} & 0 & 0 & 0 & 0 & 0 & -E_{2,7} & 0 & 0 & 0 & 0 & 0   \\
   \mathbf{E_{5,6}} & 0 & 0 & 0 & 0 & 0 & 0 & -E_{3,6} & -E_{1,6} & -E_{2,6} & 0 & 0   \\
   \mathbf{E_{5,7}} & 0 & 0 & 0 & 0 & 0 & 0 & -E_{3,7} & -E_{1,7} & -E_{2,7} & 0 & 0   \\
   \mathbf{E_{1,3}} & 0 & 0 & 0 & 0 & 0 & 0 & E_{1,5} & 0 & 0 & E_{1,6} & E_{1,7}   \\
   \mathbf{E_{2,3}} & 0 & 0 & 0 & 0 & 0 & 0 & E_{2,5} & 0 & 0 & E_{2,6} & E_{2,7}  \\
  }$$
\caption{$C(\mathfrak{g}^{\prec}(\mathcal{P}))$}\label{fig:matpremap}
\end{figure}

\noindent
In Figure~\ref{fig:matpostmap} we illustrate the matrix resulting from the recursive procedure outlined in the proof of Theorem~\ref{thm:generalind} with zero rows and columns removed; that is, the commutator matrix $C(\mathfrak{g}^{\prec}(\mathcal{P}_1))$ with zero rows and columns removed.

\begin{figure}[H]
$$\kbordermatrix{
    & \mathbf{E_{5,6}} & \mathbf{E_{5,7}} & \mathbf{E_{1,3}} & \mathbf{E_{2,3}} & \mathbf{E_{4,7}} & \mathbf{E_{2,4}} & \mathbf{E_{5_3,5}} & \mathbf{E_{3,5'}} & \mathbf{E_{1,5}} & \mathbf{E_{2,5}} & \mathbf{E_{3,6}} & \mathbf{E_{3,7}}   \\
   \mathbf{E_{1,5}} & E_{1,6} & E_{1,7} & 0 & 0 & 0 & 0 & 0 & 0 & 0 & 0 & 0 & 0    \\
   \mathbf{E_{2,5}} & E_{2,6} & E_{2,7} & 0 & 0 & 0 & 0 & 0 & 0 & 0 & 0 & 0 & 0   \\
   \mathbf{E_{3,6}} & 0 & 0 & -E_{1,6} & -E_{2,6} & 0 & 0 & 0 & 0 & 0 & 0 & 0 & 0  \\
   \mathbf{E_{3,7}} & 0 & 0 & -E_{1,7} & -E_{2,7} & 0 & 0 & 0 & 0 & 0 & 0 & 0 & 0   \\
   \mathbf{E_{5_3,5}} & E_{5_3,6} & E_{5_3,7} & 0 & 0 & 0 & 0 & 0 & 0 & 0 & 0 & 0 & 0   \\
   \mathbf{E_{3,5'}} & 0 & 0 & -E_{1,5'} & -E_{2,5'} & 0 & 0 & 0 & 0 & 0 & 0 & 0 & 0  \\
   \mathbf{E_{2,4}} & 0 & 0 & 0 & 0 & E_{2,7} & 0 & 0 & 0 & 0 & 0 & 0 & 0   \\
   \mathbf{E_{4,7}} & 0 & 0 & 0 & 0 & 0 & -E_{2,7} & 0 & 0 & 0 & 0 & 0 & 0   \\
   \mathbf{E_{5,6}} & 0 & 0 & 0 & 0 & 0 & 0 & -E_{5_3,6} & 0 & -E_{1,6} & -E_{2,6} & 0 & 0   \\
   \mathbf{E_{5,7}} & 0 & 0 & 0 & 0 & 0 & 0 & -E_{5_3,7} & 0 & -E_{1,7} & -E_{2,7} & 0 & 0   \\
   \mathbf{E_{1,3}} & 0 & 0 & 0 & 0 & 0 & 0 & 0 & E_{1,5'} & 0 & 0 & E_{1,6} & E_{1,7}   \\
   \mathbf{E_{2,3}} & 0 & 0 & 0 & 0 & 0 & 0 & 0 & E_{2,5'} & 0 & 0 & E_{2,6} & E_{2,7}  \\
  }$$
\caption{$C(\mathfrak{g}^{\prec}(\mathcal{P}_1))$}\label{fig:matpostmap}
\end{figure}

\noindent
In Figure~\ref{fig:posetmap} we illustrate the Hasse diagram of $\mathcal{P}$ \textup(left\textup) and the Hasse diagram of the height-two poset $\mathcal{P}_1$ \textup(right\textup).

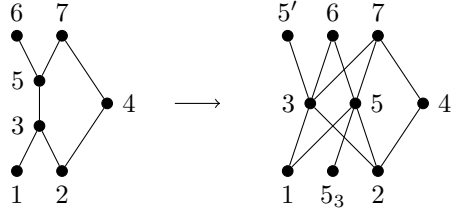
\begin{figure}[H]
$$\begin{tikzpicture}[scale=0.6]
\node (v1) at (-2.5,-1.5) [circle, draw = black, fill = black, inner sep = 0.5mm, label=below:{1}] {};
\node (v6) at (-1.5,-1.5) [circle, draw = black, fill = black, inner sep = 0.5mm, label=below:{2}] {};
\node (v2) at (-2,-0.5) [circle, draw = black, fill = black, inner sep = 0.5mm, label=left:{3}] {};
\node (v3) at (-2,0.5) [circle, draw = black, fill = black, inner sep = 0.5mm, label=left:{5}] {};
\node [circle, draw = black, fill = black, inner sep = 0.5mm, label=above:{6}] (v5) at (-2.5,1.5) {};
\node [circle, draw = black, fill = black, inner sep = 0.5mm, label=above:{7}] (v7) at (-1.5,1.5) {};
\node (v8) at (-0.5,0) [circle, draw = black, fill = black, inner sep = 0.5mm, label=right:{4}] {};
\draw (v1) -- (v2) -- (v3);
\draw (v3) -- (v5);
\draw (v6) -- (v2);
\draw (v3) -- (v7) -- (v8) -- (v6);
\draw[->] (1,0) -- (2,0);
\node [circle, draw = black, fill = black, inner sep = 0.5mm, label=below:{1}] (v9) at (3.5,-1.5) {};
\node [circle, draw = black, fill = black, inner sep = 0.5mm, label=below:{$5_3$}] (v11) at (4.5,-1.5) {};
\node [circle, draw = black, fill = black, inner sep = 0.5mm, label=left:{3}] (v14) at (4,0) {};
\node [circle, draw = black, fill = black, inner sep = 0.5mm, label=right:{5}] (v10) at (5,0) {};
\node [circle, draw = black, fill = black, inner sep = 0.5mm, label=above:{$5'$}] (v15) at (3.5,1.5) {};
\node [circle, draw = black, fill = black, inner sep = 0.5mm, label=above:{6}] (v13) at (4.5,1.5) {};
\node [circle, draw = black, fill = black, inner sep = 0.5mm, label=above:{7}] (v17) at (5.5,1.5) {};
\node [circle, draw = black, fill = black, inner sep = 0.5mm, label=right:{4}] (v18) at (6.5,0) {};
\node [circle, draw = black, fill = black, inner sep = 0.5mm, label=below:{2}] (v12) at (5.5,-1.5) {};
\draw (v9) -- (v10) -- (v11);
\draw (v12) -- (v10) -- (v13) -- (v14) -- (v9);
\draw (v12) -- (v14) -- (v15);
\draw (v10) -- (v17);
\draw (v14) -- (v17);
\draw (v17) -- (v18) -- (v12);
\end{tikzpicture}$$
\caption{Height-three poset reduced to height-two poset}\label{fig:posetmap}
\end{figure}
\end{Ex}

\section{Epilogue}\label{sec:conc}

Equation (\ref{eqn:mainthm}) provides a height-independent combinatorial formula for the index of a nilpotent Lie poset algebra. For the (solvable) Lie poset algebra case, the situation is more complicated. The best formula we have is given by the following recent theorem.


\noindent



\begin{theorem*}[\textbf{\cite{CM}}, 2019]
If $\mathcal{P}$ is a poset of height at most two, then 
$$\ind\mathfrak{g}(\mathcal{P})=|Rel_E(\mathcal{P})|-|\mathcal{P}|+2\cdot C_{\mathcal{P}}+\sum_{p\in \mathcal{P}\backslash Ext(\mathcal{P})}UD(\mathcal{P},p),$$ where $C_{\mathcal{P}}$ denotes the number of components in the Hasse diagram of $\mathcal{P}$ and

$$UD(\mathcal{P},p) =  \begin{cases} 
      |U(\mathcal{P},j)-D(\mathcal{P},p)|, & U(\mathcal{P},p)\neq D(\mathcal{P},p); \\
      2, & \text{otherwise.}
   \end{cases}
$$
\end{theorem*}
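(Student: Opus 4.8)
The plan is to compute $\ind\mathfrak{g}(\mathcal{P})$ through the Dixmier rank formula of Theorem~\ref{thm:commat}, writing $\ind\mathfrak{g}(\mathcal{P})=\dim\mathfrak{g}(\mathcal{P})-\rank C(\mathfrak{g}(\mathcal{P}))$, where $\dim\mathfrak{g}(\mathcal{P})=|\mathcal{P}|+|Rel(\mathcal{P})|$ since a basis of $\mathfrak{g}(\mathcal{P})$ consists of the diagonal elements $E_{p,p}$ together with the $E_{p_i,p_j}$ for which $p_i\prec p_j$. Ordering this basis with the diagonal elements first, the commutator matrix acquires the block form
\[
C(\mathfrak{g}(\mathcal{P}))=\begin{bmatrix} 0 & A\\ -A^{T} & B\end{bmatrix},
\]
where $B=C(\mathfrak{g}^{\prec}(\mathcal{P}))$ is precisely the nilpotent commutator matrix already analyzed in Theorem~\ref{thm:h2ind}, and $A$ is the ``weight'' block whose $(E_{p,p},E_{q,r})$-entry is $(\delta_{p,q}-\delta_{p,r})E_{q,r}$, recording the action of the diagonal torus on the root vectors. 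Since $\ind$ is additive over the Lie-algebra direct sum induced by the connected components of the Hasse diagram, one may first reduce to the connected case $C_{\mathcal{P}}=1$ and recover the general statement by summing.

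The first main step is a purely linear-algebraic reduction. Analyzing the kernel of the skew-symmetric matrix above, one finds
\[
\rank C(\mathfrak{g}(\mathcal{P}))=2\,\rank A+\rank\!\left(B|_{\ker A}\right),
\]
where $B|_{\ker A}$ denotes the restriction of the skew bilinear form $B$ to the subspace $\ker A$; indeed, the radical of $C(\mathfrak{g}(\mathcal{P}))$ is governed by $\{v\in\ker A:\ Bv\in\operatorname{im}A^{T}\}$, which is exactly the radical of $B|_{\ker A}$ once one uses $\operatorname{im}A^{T}=(\ker A)^{\perp}$. The second step computes $\rank A$: rescaling each column by the nonzero generic entry $E_{q,r}$ identifies $A$ with the signed vertex--edge incidence matrix of the comparability graph of $\mathcal{P}$, whose rank over a field of characteristic zero equals $|\mathcal{P}|-C_{\mathcal{P}}$. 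Correspondingly, $\ker A$ is a rescaling of the cycle space of that graph.

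The crux is the third step: evaluating $\rank(B|_{\ker A})$. Because $\mathcal{P}$ has height at most two, every non-extremal relation of $Rel(\mathcal{P})$ passes through a unique middle element $p\in\mathcal{P}\setminus Ext(\mathcal{P})$, for which $U_E(\mathcal{P},p)=U(\mathcal{P},p)$ and $D_E(\mathcal{P},p)=D(\mathcal{P},p)$, and $B$ is supported exactly on the products $[E_{l,p},E_{p,b}]=E_{l,b}$ composing through such $p$. One would choose a basis of the cycle space adapted to each middle element and show that $B|_{\ker A}$ contributes $2\min(D(\mathcal{P},p),U(\mathcal{P},p))$ when $U(\mathcal{P},p)\neq D(\mathcal{P},p)$, and $2\min(D(\mathcal{P},p),U(\mathcal{P},p))-2$ when $U(\mathcal{P},p)=D(\mathcal{P},p)$; that is, $\rank(B|_{\ker A})=\sum_{p\in\mathcal{P}\setminus Ext(\mathcal{P})}\big(D(\mathcal{P},p)+U(\mathcal{P},p)-UD(\mathcal{P},p)\big)$. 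Combining with $|Rel(\mathcal{P})|-|Rel_E(\mathcal{P})|=\sum_{p\in\mathcal{P}\setminus Ext(\mathcal{P})}\big(D(\mathcal{P},p)+U(\mathcal{P},p)\big)$ and $\rank A=|\mathcal{P}|-C_{\mathcal{P}}$, the formula $\ind\mathfrak{g}(\mathcal{P})=|Rel_E(\mathcal{P})|-|\mathcal{P}|+2\,C_{\mathcal{P}}+\sum_{p}UD(\mathcal{P},p)$ follows after simplification.

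I expect the main obstacle to be exactly this last rank computation. The interaction between the torus block $A$ and the nilpotent block $B$ is benign away from balanced middle elements, but when $U(\mathcal{P},p)=D(\mathcal{P},p)$ the square generic product block becomes invertible and forces an extra linear dependence among the cycle vectors passing through $p$ together with the incident diagonal rows, dropping the rank of $B|_{\ker A}$ by exactly two. Isolating this degeneracy cleanly, and verifying that it does not propagate between distinct middle elements so that the contributions remain genuinely local, is the delicate part, and is precisely the phenomenon that the two-case definition of $UD(\mathcal{P},p)$ is designed to encode.
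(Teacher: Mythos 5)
The statement you are proving is not actually proved in this paper: it is quoted in the Epilogue from \cite{CM} purely to contrast the solvable case with Theorem~\ref{thm:generalind}, so there is no internal proof to compare against and your argument must stand on its own. Much of your framework does stand: the dimension count $\dim\mathfrak{g}(\mathcal{P})=|\mathcal{P}|+|Rel(\mathcal{P})|$; the block form with $B=C(\mathfrak{g}^{\prec}(\mathcal{P}))$ (correct, since the bracket of two strict-relation basis vectors never produces a diagonal term); the identity $\rank C(\mathfrak{g}(\mathcal{P}))=2\rank A+\rank\bigl(B|_{\ker A}\bigr)$, which is a true fact about skew forms vanishing on the first summand, and your radical argument is essentially its proof; the computation $\rank A=|\mathcal{P}|-C_{\mathcal{P}}$ via the signed incidence matrix; and the reduction to connected posets. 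Your bookkeeping is also right: granting these steps, the theorem is equivalent to the single equality $\rank\bigl(B|_{\ker A}\bigr)=\sum_{p\in\mathcal{P}\setminus Ext(\mathcal{P})}\bigl(D(\mathcal{P},p)+U(\mathcal{P},p)-UD(\mathcal{P},p)\bigr)$.

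The gap is that this equality is the entire mathematical content of the theorem, and you never prove it: you obtain its value by solving backwards from the formula you are trying to establish, and then describe what ``one would'' do. Two distinct arguments are missing. First, the local computation at a single middle element $p$: after rescaling, the restriction of $B$ to the fundamental cycles through $p$ has the shape $Y-Y^{T}$ with $\rank Y=\min(D(\mathcal{P},p),U(\mathcal{P},p))$, and one must actually show its rank equals $2\min(D,U)$ when $D\neq U$ but drops to $2\min(D,U)-2$ when $D=U$; already for $D=U=2$ this is a nontrivial Pfaffian cancellation, and the general case needs an honest analysis (for instance, characterizing the radical via row- and column-sum conditions on the cycle coefficients). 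Second, the globalization you call ``locality'': the cycle space of the comparability graph does not split into pieces attached to individual middle elements -- a cycle can pass through several middle elements and through extremal--extremal edges, and two cycles sharing a middle element produce nonzero cross terms in $B$ -- so even granting each local rank, the claim that the contributions add requires constructing an adapted basis and showing the cross terms neither raise nor lower the total rank. You flag both points yourself as the delicate part, and that self-assessment is accurate: as written, the proposal is a sound reduction plus a verification that the strategy is consistent with the known answer, not a proof of it.
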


\begin{Ex}\label{ex:posetmat}
Let $\mathcal{P}$ be the poset given in Example~\ref{ex:posnot}; that is, $\mathcal{P}=\{1,2,3,4,5,6\}$ with $1,2\preceq 3\preceq 4,5,6$. The matrix form of elements in $\mathfrak{g}(\mathcal{P})$ is illustrated in Figure~\ref{fig:tA}, where the $*$'s denote potential non-zero entries. We have that $$|Rel_E(\mathcal{P})|=6,\quad|\mathcal{P}|=6,\quad C_{\mathcal{P}}=1,\quad \mathcal{P}\backslash Ext(\mathcal{P})=\{3\},\quad U(\mathcal{P},3)=3,\quad D(\mathcal{P},3)=2,\quad\text{and}\quad UD(\mathcal{P},3)=1.$$ Thus, $\ind\mathfrak{g}(\mathcal{P})=6-6+2+1=3$. Note that this differs from the index of the corresponding nilpotent Lie poset algebra $\mathfrak{g}^{\prec}(\mathcal{P})$, found in Example~\ref{ex:h2commat} to be 7.
\begin{figure}[H]
$$\kbordermatrix{
    & 1 & 2 & 3 & 4 & 5 & 6 \\
   1 & * & 0 & * & *  & * & *   \\
   2 & 0 & * & * & * & * & *  \\
   3 & 0 & 0 & * & * & * & *  \\
   4 & 0 & 0 & 0 & * & 0 & 0  \\
   5 & 0 & 0 & 0 & 0 & * & 0  \\
   6 & 0 & 0 & 0 & 0 & 0 & *  \\
  }$$
\caption{Matrix form of $\mathfrak{g}(\mathcal{P})$}\label{fig:tA}
\end{figure}
\end{Ex}

In moving from the nilpotent to the solvable case, the only change is the addition of basis elements corresponding to diagonal matrices. This modification is substantive -- such diagonal elements appear to form obstructions to applying an inductive argument, similar to that used here, to establish height-independent index formulas.



\begin{thebibliography}{abcd}

\bibitem{CHM} V. Coll, M. Hyatt, and C. Magnant. ``Symplectic meanders." \textit{Communications in Algebra}, 1-13, 2017.

\bibitem{Coll2} V. Coll, M. Hyatt, C. Magnant, and H. Wang. ``Meander graphs and Frobenius seaweed Lie algebras II." \textit{Journal of Generalized Lie Theory and Applications}, 9(1), 2015.

\bibitem{CMBCD}
V. Coll and N. Mayers. ``The index and spectrum of Lie poset algebras of type B, C, and D." arXiv:2001.11006v3, April 6, 2020.

\bibitem{CM}
V. Coll and N. Mayers. ``The index of Lie poset algebras." 	arXiv: 1908.06573, August 19, 2019.

\bibitem{DK}
V.~Dergachev and A.~Kirillov. ``Index of Lie algebras of seaweed type." \textit{Journal of Lie Theory}, 10: 331--343, 2000.

\bibitem{D}
J. Dixmier. 
``Enveloping Algebras." Vol. 14. Newnes, 1977.

\bibitem{Elash} A. Elashvili. ``On the index of parabolic subalgebras of semisimple Lie algebras." \textit{Unpublished preprint}, 1990.

\bibitem{G1}
M. Gerstenhaber and A. Giaquinto. ``Boundary solutions of the classical Yang-Baxter equation." \textit{Letters in Mathematical Physics}, 40:337-353, 1997.

\bibitem{G2}
M. Gerstenhaber and A. Giaquinto. ``Graphs, Frobenius functionals, and the classical Yang-Baxter equation." arXiv:0808.2423v1, August 18, 2008.

\bibitem{Joseph} 
A. Joseph. ``On semi-invariants and index for biparabolic (Seaweed) Algebras, I." \textit{Journal of Algebra}, 305(1):487-515, 2006.

\bibitem{nilco}
L. Lampret and A. Vavpetic. ``(Co)homology of Lie algebras via algebraic Morse theory." \textit{Journal of Algebra}, 463: 254-277, 2016.

\bibitem{Frobnil}
M. Goze and E. Remm. ``Contact and Frobeniusian forms on Lie groups." \textit{Differential Geometry and its Applications}, 35: 74-94, 2014.

\bibitem{Panov}
A.N. Panov. ``On the index of certain nilpotent Lie algebras." \textit{Journal of Mathematical Sciences}, 161(1): 122-129, 2009.

\bibitem{Panyushev1} 
D. Panyushev. ``Inductive formulas for the index of seaweed Lie algebras." \textit{Moscow Mathematical Journal}, 1(2):221-241, 2001.

\bibitem{Panyushev2} 
D. Panyushev and O. Yakimova. ``On seaweed subalgebras and meander graphs in type C." \textit{Pacific Journal of Mathematics}, 285(2):485-499, 2016.

\bibitem{Panyushev3}
D. Panyushev and O. Yakimova. ``On seaweed subalgebras and meander graphs in type D." \textit{Journal of Pure and Applied Algebra}, 2018.

\bibitem{Ro}
G. Rota. ``On the foundations of combinatorial theory, I:  Theory of Mobius functions" \textit{Probability theory and related fields}, 2.4: 340-368, 1964.

\end{thebibliography}
\end{document}